\documentclass[a4paper,twoside]{amsart}
\usepackage{xypic,amscd,amssymb,latexsym,srcltx}
\usepackage{dsfont}
\usepackage{amsthm}
\topmargin-0.5in \setlength{\parskip}{0.2cm}
\def\@setcopyright{}
\makeatother

\newtheorem{lemma}{Lemma}[section]
\newtheorem{proposition}[lemma]{Proposition}
\newtheorem{corollary}[lemma]{Corollary}
\newtheorem{theorem}[lemma]{Theorem}

\newtheorem{question}[lemma]{Question}

\newtheorem*{acknowledgments*}{ACKNOWLEDGMENTS}

\newtheorem*{conj*}{Conjecture}
\newtheorem*{thm*}{Main Theorem}
\newtheorem*{remark*}{Remark}
\newtheorem*{lemma*}{Lemma}
\newtheorem*{example*}{[Example]}

\begin{document}
\begin{center}
{\Large \bf Elementary abelian subgroups in some special $p$-groups
}\\

\bigskip
\end{center}

\begin{center}
Xingzhong Xu$^{1, 2}$

\end{center}

\footnotetext {$^{*}$~~Date: 05/11/2017.
}
\footnotetext {
1. Department of Mathematics, Hubei University, Wuhan, 430062, China

2. Departament de Matem$\mathrm{\grave{a}}$tiques, Universitat Aut$\mathrm{\grave{o}}$noma de Barcelona, E-08193 Bellaterra,
Spain

Xingzhong Xu's E-mail: xuxingzhong407@mat.uab.cat; xuxingzhong407@126.com

Supported by NSFC grant (11501183).
}

\title{}
\def\abstractname{\textbf{Abstract}}

\begin{abstract}\addcontentsline{toc}{section}{\bf{English Abstract}}
Let $P$ be a finite $p$-group and $p$ be an odd prime.
Let $\mathcal{A}_p(P)_{\geq2}$ be a poset
consisting of elementary abelian subgroups of rank at least 2.
If the derived subgroup $P'\cong C_p\times C_p$, then
the spheres occurring in $\mathcal{A}_p(P)_{\geq2}$ all have the
same dimension.
\hfil\break

\textbf{Key Words:} posets; subgroup complexes.

\end{abstract}

\maketitle

\section{\bf Introduction}

Let $P$ be a finite $p$-group, and the poset of all nontrivial elementary abelian $p$-subgroup of $P$
is denoted by $\mathcal{A}_p(P)$. In \cite{BT}, Bouc and Th\'evenaz focused on subposet $\mathcal{A}_p(P)_{\geq2}$
consisting of elementary abelian subgroups of rank at least 2. And they determined the homotopy type of the complex
$\mathcal{A}_p(P)_{\geq2}$: a wedge of spheres (of possibly different dimensions). We refer the reader to
\cite{Q} for more details about $p$-subgroups complex.
After computer calculations, Bouc and Th\'evenaz posed the following questions in \cite{BT}.
\begin{question}(Bouc, Th\'evenaz) Let $P$ be a $p$-group. Do the spheres occurring in $\mathcal{A}_p(P)_{\geq2}$ all have the
same dimension if $p$ is odd?  Does one get only two consecutive dimensions if $p=2$?
\end{question}
In \cite{Bo2}, Bornand proved that Question 1.1 has a positive answer if $P$
has a cyclic derived subgroup. Recently, \cite{FS} provided a negative answer to Question 1.1.
But determining the dimensions of those spheres of $\mathcal{A}_p(P)_{\geq2}$
is still an interesting question,
 thus  the following theorem is the main result in this short paper.
\begin{theorem} Let $P$ be a finite $p$-group and $p$ be an odd prime. If the derived subgroup $P'\cong C_p\times C_p$, then
the spheres occurring in $\mathcal{A}_p(P)_{\geq2}$ all have the
same dimension.
\end{theorem}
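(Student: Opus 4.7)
The strategy is to combine the Bouc--Th\'evenaz spherical decomposition of $\mathcal{A}_p(P)_{\geq 2}$ with Bornand's theorem \cite{Bo2}, which handles the case of cyclic $P'$. Since $P'\cong C_p\times C_p$ is itself elementary abelian of rank $2$, it lies in $\mathcal{A}_p(P)_{\geq 2}$ and serves as a natural pivot.

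I would first dispose of two easy reductions. If $\Omega_1(Z(P))$ has rank $\geq 2$, the monotonic self-map $A\mapsto A\cdot\Omega_1(Z(P))$ lies above the identity and lands in the cone $\{A\colon A\supseteq\Omega_1(Z(P))\}$, so $\mathcal{A}_p(P)_{\geq 2}$ is contractible and the claim is vacuous. Assume henceforth $\Omega_1(Z(P))\cong C_p$. The conjugation action of $P$ on $P'$ factors through a quotient of the abelian $p$-group $P/P'$ into $\Aut(P')\cong\GL_2(\mathbb{F}_p)$; since the Sylow $p$-subgroup of the latter is cyclic of order $p$, the image is trivial or of order $p$. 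In the trivial case $P'\subseteq Z(P)$ and $P$ has class $\leq 2$, so the oddness of $p$ gives $(ab)^p=a^p b^p [b,a]^{\binom{p}{2}}=a^p b^p$, making $AP'$ elementary abelian whenever $A$ is; the contraction $A\mapsto AP'$ then shows $\mathcal{A}_p(P)_{\geq 2}$ is contractible.

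It remains to handle $P$ of class $\geq 3$, where $[P\colon Q]=p$ with $Q=C_P(P')$ and $Z_1:=Z(P)\cap P'\cong C_p$ is the unique fixed line of the unipotent action. The trick $A\mapsto AZ_1$ yields $\mathcal{A}_p(P)_{\geq 2}\simeq\mathcal{A}_p(P)_{\geq 2}^{\supseteq Z_1}$, and $A\leftrightarrow A/Z_1$ identifies the latter order-isomorphically with the subposet of $\mathcal{A}_p(\bar P)_{\geq 1}$ (where $\bar P:=P/Z_1$) consisting of those $\bar A$ whose preimage in $P$ is elementary abelian. Since $\bar P'=P'/Z_1\cong C_p$ is cyclic, Bornand's theorem yields $\mathcal{A}_p(\bar P)_{\geq 2}$ as a wedge of $d$-spheres for a single $d$, and the plan is to transport this uniformity through the correspondence just described.

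The hard part is controlling the liftability condition. The preimage $\pi^{-1}(\bar A)$ always has class $\leq 2$ with derived subgroup inside $Z_1$, and is elementary abelian iff an alternating commutator form $\omega\colon\bar A\wedge\bar A\to Z_1$ and (because $p$ is odd and the class is $\leq 2$) the $p$-th power homomorphism $\bar A\to Z_1$ both vanish. I would split the analysis along $\bar Q:=Q/Z_1$: for $\bar A\leq\bar Q$ the preimage lives in the class-$2$ subgroup $Q$ and the contraction of the previous paragraph applies internally, making liftability automatic, while for $\bar A\not\leq\bar Q$ any element acting nontrivially on $P'$ forces $\bar A\cap\overline{P'}$ to be either trivial or $\overline{P'}$, sharply constraining the rank structure of $\bar A$. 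Matching the liftable chains so obtained against Bornand's wedge decomposition of $\mathcal{A}_p(\bar P)_{\geq 2}$ then transfers the uniform sphere dimension to $\mathcal{A}_p(P)_{\geq 2}$.
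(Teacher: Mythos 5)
Your preliminary reductions are sound and partly parallel the paper's own: the contractibility of $\Delta(\mathcal{A}_p(P)_{\geq 2})$ when $\Omega_1(Z(P))$ has rank at least $2$, the observation that $[P:C_P(P')]\leq p$, and the identification of $Z_1=Z(P)\cap P'$ as the canonical central subgroup of order $p$ all appear, in some form, in the paper's Steps 1--2. The gap is in your last step, which is where the entire difficulty of the theorem lives. After the equivalence $\mathcal{A}_p(P)_{\geq 2}\simeq\mathcal{A}_p(P)_{\geq 2}^{\supseteq Z_1}$ and the order isomorphism with the ``liftable'' subposet $\mathcal{L}$ of $\mathcal{A}_p(\bar P)_{\geq 1}$, you propose to ``transport'' the uniform sphere dimension of $\mathcal{A}_p(\bar P)_{\geq 2}$ (supplied by Bornand) to $\mathcal{L}$. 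But $\mathcal{L}$ is a subposet of $\mathcal{A}_p(\bar P)_{\geq 1}$, not of $\mathcal{A}_p(\bar P)_{\geq 2}$ --- its essential minimal vertices are the rank-one subgroups $\bar A$ lifting to rank-two subgroups of $P$ --- and it is in general a proper subposet cut out by the vanishing of the commutator form and of the $p$-th power map. Knowing that $\Delta(\mathcal{A}_p(\bar P)_{\geq 2})$ is a wedge of $d$-spheres gives no control over the homotopy type of such a subposet: the order complex of a subposet of a poset whose order complex is a wedge of spheres of one dimension can have a completely different homotopy type. ``Matching the liftable chains against Bornand's wedge decomposition'' is not an argument; and since $\mathcal{L}$ is order-isomorphic to a poset homotopy equivalent to $\mathcal{A}_p(P)_{\geq 2}$ itself, proving that $\Delta(\mathcal{L})$ is a wedge of spheres of a single dimension is exactly the original problem restated, not a reduction of it.

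For comparison, the paper takes a different route at precisely this juncture: it invokes the Bouc--Th\'evenaz decomposition $\Delta(\mathcal{A}_p(P)_{\geq 2})\simeq\bigvee_{i=1}^{k}\Delta(\mathcal{A}_p(P_i)_{\geq 2})$ with $P_i=C_M(E_i)$, where $M=C_P(P')$ and the $E_i$ run over the rank-two elementary abelian subgroups containing $Z_1$ and not contained in $M$. A Hall--Witt computation shows $M'\leq Z_1$, hence each $P_i$ has derived subgroup of order at most $p$ and cyclic centre; Blackburn's classification then forces each non-contractible $P_i$ to be a central product of $l_i$ copies of $p_+^{1+2}$, whose truncated Quillen complex is a wedge of $(l_i-1)$-spheres by Bornand's Proposition 4.7; and an explicit commutator argument shows that two such centralizers cannot have different numbers of factors. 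To salvage your plan you would need to replace the ``transport'' step by an actual analysis of $\mathcal{L}$ of comparable substance, and at that point the Bouc--Th\'evenaz wedge decomposition is the natural tool to reach for.
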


$Structure~ of ~ the~ paper:$ After recalling the basic properties of finite $p$-group with cyclic derived group in Section 2,
we give a proof of Theorem 1.2  in Section 3.

\section{\bf Notation and finite $p$-groups}

In this section, let $p$ be an odd  prime. We refer the reader to \cite{G, Ha} for the theory of finite group and topology respectively.
Now, recall that a minimal nonabelian $p$-group means that all its proper subgroups are abelian. Let $P$ be a minimal nonabelian $p$-group. Then $P$
is one of the following groups:
\begin{eqnarray*}
& ~ & S(n,m,0):=\langle
x,y|x^{p^{n}}=y^{p^{m}}=1, [x,y]=x^{p^{n-1}}\rangle(n\geq 2);\\
& ~ & S(n,m,1):=\langle x,y,z|x^{p^{n}}=y^{p^{m}}=z^{p}=1, [x,y]=z,
[z,x]=[z,y]=1\rangle.\\
\end{eqnarray*}
Here, $S(n,1,1)\cong S(1,n,1)$.
One can find these results about nonabelian $p$-groups in \cite{B}, \cite{R}.
In particular, $S(2,1,0)$ and $S(1,1,1)$ are extraspecial $p$-groups. And they always are written
as $p_-^{1+2}, p_+^{1+2}$ respectively.

Recall that a group $H$ is said to be the central product of its normal subgroups $H_{1},H_{2}\ldots, H_{n}$ if $H=H_{1}H_{2}\cdots H_{n}$,
$[H_{i}, H_{j}]=1$ for $i\neq j$, and $H_{i}\cap \prod_{j\neq i}H_{j}\leq Z(H)$ for all $i$.

The classification of finite $p$-groups with derived subgroup of prime order, traces back to S. Blackburn.

\begin{proposition}\cite[Proposition 9]{Bl} Let $p$ be an odd  prime and $S$ be a finite $p$-group. If $|S'|=p$, then $S$ is a central product as following types:
$$S\cong S_{1}\ast S_{2}\ast\cdots\ast
S_{l}\ast A$$ where $S_{i}\cong S(n_{i},m_{i},0)~\mathrm{or}~S(n_{i},m_{i},1)$
and $A$ is a finite abelian $p$-group. Here, $S_{i}\cap (\prod_{j\neq i}S_{j})A=S'$ and $A\cap \prod_{i}S_{i}\leq S'$.
\end{proposition}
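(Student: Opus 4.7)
The plan is to exploit $|S'|=p$ to put $S$ in nilpotency class at most $2$, to encode the commutator as an alternating form on the abelian $p$-group $\bar{S}:=S/S'$, and then to read off the central product from a direct sum decomposition of $(\bar{S},\omega)$. Since $S'$ is normal of order $p$, conjugation yields $S/C_S(S')\hookrightarrow\Aut(S')\cong C_{p-1}$; because $S$ is a $p$-group the left-hand side is trivial, so $S'\leq Z(S)$ and $S$ has class $\leq 2$. Fixing an identification $S'\cong\mathbb{F}_p$, the commutator is biadditive in each variable and kills $S'$ on each side, so it descends to a biadditive alternating form $\omega\colon\bar{S}\times\bar{S}\to\mathbb{F}_p$. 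For $p$ odd and class $\leq 2$ one has $(xy)^p=x^py^p[y,x]^{\binom{p}{2}}=x^py^p$ since $[y,x]^p=1$, and likewise $[x^p,y]=[x,y]^p=1$; hence $\omega$ factors further through $\bar{S}/p\bar{S}=S/\Phi(S)$, and its radical is the image of $Z(S)$.

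The technical heart is a structure theorem for $(\bar{S},\omega)$: there is an internal direct sum $\bar{S}=\bar{S}_1\oplus\cdots\oplus\bar{S}_l\oplus\bar{A}$ of abelian groups, with each $\bar{S}_i$ of rank $2$ spanned by $\bar{x}_i,\bar{y}_i$ satisfying $\omega(\bar{x}_i,\bar{y}_i)\neq 0$, with all $\omega$-pairings between distinct summands vanishing, and with $\bar{A}$ contained in the radical of $\omega$. I would prove this by induction on $|\bar{S}|$: if $\omega\equiv 0$ set $\bar{A}=\bar{S}$; otherwise pick $\bar{x}\in\bar{S}$ of maximal order admitting some $\bar{y}$ with $\omega(\bar{x},\bar{y})\neq 0$, then pick such a $\bar{y}$ of maximal order, and verify that $\langle\bar{x},\bar{y}\rangle$ is a pure rank-$2$ summand of $\bar{S}$ so that the induction applies to a suitable $\omega$-compatible complement. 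Then lift: choose $x_i,y_i\in S$ projecting to $\bar{x}_i,\bar{y}_i$ arbitrarily, lift a generating set of $\bar{A}$ to elements $z_1,\dots,z_k\in Z(S)$ (possible since $\bar{A}$ lies in the radical, which equals the image of $Z(S)$), and set $S_i:=\langle x_i,y_i\rangle$ and $A:=\langle z_1,\dots,z_k\rangle\cdot S'\leq Z(S)$.

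Each $S_i$ is $2$-generated, and $[x_i,y_i]\neq 1$ generates $S'$, so $|S_i'|=p$ and $S_i$ is minimal nonabelian; the classification recalled in Section~2 then identifies $S_i\cong S(n_i,m_i,0)$ or $S(n_i,m_i,1)$. Orthogonality of distinct summands under $\omega$ gives $[S_i,S_j]=1$ for $i\neq j$, while $A\leq Z(S)$ gives $[S_i,A]=1$. The decomposition $\bar{S}=\bigoplus\bar{S}_i\oplus\bar{A}$ implies $S=S_1\cdots S_l\cdot A$ (using $S'\leq A$) and, by projecting to $\bar{S}$, yields $S_i\cap(\prod_{j\neq i}S_j)A\leq S'$ and $A\cap\prod_iS_i\leq S'$; the reverse inclusion $S'\leq S_i\cap(\prod_{j\neq i}S_j)A$ is automatic because $S'\leq S_i\cap A$, giving equality in the first identity.

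The main obstacle is the structure theorem for $(\bar{S},\omega)$ at the abelian $p$-group level. A purely $\mathbb{F}_p$-linear symplectic decomposition of $V=S/\Phi(S)$ would be routine but insufficient, since it only controls the intersections modulo $\Phi(S)$, which can strictly contain $S'$. The delicate point is the purity claim in the induction: the maximal-order choice of $\bar{x}$ and $\bar{y}$ must guarantee that $\langle\bar{x},\bar{y}\rangle$ is a direct summand of $\bar{S}$ rather than merely of $S/\Phi(S)$, and that the complementary summand inherits the form compatibly; this is essentially the analogue, for alternating forms on abelian $p$-groups, of the symplectic basis theorem over a field, and it is the principal technical ingredient on which the entire classification rests.
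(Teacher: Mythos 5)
First, note that the paper does not prove this proposition at all: it is quoted verbatim from S.~Blackburn \cite[Proposition 9]{Bl}, so there is no in-paper argument to compare against. Your overall skeleton is the standard (and essentially Blackburn's) one: $S' \leq Z(S)$ since $S/C_S(S')$ embeds in $\Aut(C_p)\cong C_{p-1}$, the commutator descends to an alternating biadditive form $\omega$ on $\bar S = S/S'$ with values in $S'\cong C_p$ vanishing on $p\bar S$ and with radical $Z(S)/S'$, and an $\omega$-orthogonal decomposition of $\bar S$ into rank-$\le 2$ summands plus a piece inside the radical lifts to the desired central product, with R\'edei's classification (recalled in Section 2, using that for odd $p$ a $2$-generated group with derived subgroup of order $p$ is minimal nonabelian) identifying each factor. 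You also correctly isolate where the difficulty lives: the decomposition must be of the abelian group $\bar S$, not merely of the $\mathbb{F}_p$-space $S/\Phi(S)$.

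However, the selection rule you propose for the inductive step is wrong, and this is exactly the delicate point you flagged. Take $S = p_+^{1+2}\times C_{p^2}$, say $p_+^{1+2}=\langle V,W\rangle$ and $C_{p^2}=\langle U\rangle$, so that $\bar S = \langle \bar U\rangle\times\langle\bar V\rangle\times\langle\bar W\rangle\cong C_{p^2}\times C_p\times C_p$ with $\operatorname{rad}(\omega)=\langle\bar U\rangle$ and $\omega(\bar V,\bar W)\neq 0$. Every element of $\bar S$ outside the radical that has maximal order $p^2$ is of the form $\bar x=\bar U^{\alpha}\bar V^{\beta}\bar W^{\gamma}$ with $p\nmid\alpha$ and $(\beta,\gamma)\not\equiv(0,0)$, so your rule forces such a choice, and then forces $\bar y$ of order $p^2$ as well. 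For any such pair, $H=\langle\bar x,\bar y\rangle$ contains $\bar x^{\,p}=\bar U^{\,p\alpha}\neq 1$, while $H^{\perp}=\operatorname{rad}(\omega)=\langle\bar U\rangle$; hence $H\cap H^{\perp}\neq 1$ and \emph{no} $\omega$-orthogonal complement to $H$ exists (the only candidate complement inside $H^\perp$ would be $\langle\bar U^{\,p}\rangle$, which already lies in $H$). At the group level, $S_1=\langle UV,UW\rangle$ is minimal nonabelian of order $p^4$ containing the central element $U^p\notin S'$, and $C_S(S_1)=\langle U, S'\rangle$, so no abelian $A$ with $S_1A=S$ and $S_1\cap A=S'$ can exist. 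Thus the induction cannot proceed with your choice; the correct pair here is $\bar V,\bar W$ of order $p$, not of maximal order. The repair is genuinely nontrivial: one must choose the hyperbolic pairs compatibly with the filtration of $\bar S$ by orders and with the radical (this is the content of Blackburn's proof), and no greedy order-maximizing rule does this. Note also that maximal order cannot simply be replaced by minimal order, as the example $\bar S\cong C_{p^2}\times C_{p^2}$ with nondegenerate induced form on $S/\Phi(S)$ shows (there every order-$p$ element lies in the radical). As written, the central technical lemma on which your whole argument rests is false, so the proof has a genuine gap.
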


Moreover, if $Z(S)$ is cyclic, we get the following corollary which will be used to prove Theorem 1.2 in Section 3.
\begin{corollary}
Let $p$ be an odd  prime and $S$ be a finite $p$-group. If $|S'|=p$ and $Z(S)$ is cyclic, then $S$ is a central product as following types:
$$S\cong S(n,1,0)\ast p_+^{1+2}\ast\cdots\ast
p_+^{1+2},~\mathrm{or}~S\cong p_+^{1+2}\ast p_+^{1+2}\ast\cdots\ast
p_+^{1+2}.$$
\end{corollary}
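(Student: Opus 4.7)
The plan is to invoke Proposition~2.1 and pare down the possible factor types using the hypothesis that $Z(S)$ is cyclic. Writing $S\cong S_1\ast\cdots\ast S_l\ast A$, we have $A\leq Z(S)$ and each $Z(S_i)\leq Z(S)$, so $A$ and every $Z(S_i)$ is cyclic, and as subgroups of the cyclic $p$-group $Z(S)$ they are linearly ordered. This ordering is the main combinatorial tool I would use.

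First I would classify the minimal nonabelian factors with cyclic centre. For $S(n,m,0)$ (with $n\geq 2$), the computation $[x^p,y]=[x,y]^p=x^{p^n}=1$ shows $Z(S(n,m,0))=\langle x^p,y^p\rangle\cong C_{p^{n-1}}\times C_{p^{m-1}}$, which is cyclic only when $m=1$. For $S(n,m,1)$ the centre is $\langle x^p,y^p,z\rangle$ and a similar rank count forces $n=m=1$, giving $S(1,1,1)=p_+^{1+2}$. So each $S_i$ is either $S(n_i,1,0)$ with $n_i\geq 2$, or $p_+^{1+2}$.

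Next I would show that at most one factor is of type $S(n,1,0)$ with $n\geq 3$. If $S_i$ and $S_j$ are two type-$0$ factors, then $Z(S_i)$ and $Z(S_j)$ are comparable in $Z(S)$; say $Z(S_i)\leq Z(S_j)$. Then
\[
Z(S_i)\;\leq\;S_i\cap Z(S_j)\;\leq\;S_i\cap S_j\;\leq\;S_i\cap\Bigl(\prod_{k\neq i}S_k\Bigr)A\;=\;S',
\]
so $|Z(S_i)|=p$, which forces $n_i=2$ and $S_i=S(2,1,0)=p_-^{1+2}$. Hence the type-$0$ factors consist of at most one $S(n,1,0)$ with $n\geq 3$, together with some number of copies of $p_-^{1+2}$. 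The remaining reduction uses two central-product identities valid for odd $p$: $p_-^{1+2}\ast p_-^{1+2}\cong p_+^{1+2}\ast p_+^{1+2}$, and $S(n,1,0)\ast p_-^{1+2}\cong S(n,1,0)\ast p_+^{1+2}$ for $n\geq 3$. The latter is produced by replacing the generator $u$ of $p_-^{1+2}$ by $u':=u\cdot x^{-p^{n-2}}$; one checks $(u')^p=u^p\cdot x^{-p^{n-1}}=x^{p^{n-1}}\cdot x^{-p^{n-1}}=1$, and $[u',y]=[x,y]^{-p^{n-2}}=x^{-p^{2n-3}}=1$ when $n\geq 3$, so that $\langle u',v,x^{p^{n-1}}\rangle\cong p_+^{1+2}$ centralises $\langle x,y\rangle$. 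Iterating, every $p_-^{1+2}$ factor becomes a $p_+^{1+2}$ factor, with any leftover lone $p_-^{1+2}$ serving as $S(2,1,0)$ in the first form of the statement.

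The main obstacle is the abelian factor $A$. The conditions $A\cap\prod_iS_i\leq S'$ and $A\leq Z(S)$, together with the linear ordering of subgroups of the cyclic $Z(S)$, should be enough to show that $A$ gets absorbed: if $|A|>p$ then comparing $A$ with each $Z(S_i)$ inside $Z(S)$ forces $Z(S_i)\leq S'$ for every $i$ except possibly the unique large type-$0$ factor, after which $A$ can be identified with a subgroup of $Z(S(n,1,0))$. Verifying this absorption carefully, along with the substitution identity used in converting $p_-^{1+2}$ to $p_+^{1+2}$, is the most delicate part of the argument.
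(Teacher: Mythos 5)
The paper states Corollary 2.2 without proof, so there is no argument of the author's to compare against; judged on its own terms, your proposal has two genuine problems. First, the identity $p_-^{1+2}\ast p_-^{1+2}\cong p_+^{1+2}\ast p_+^{1+2}$ is false for odd $p$: the left-hand side contains an element of order $p^2$, while a central product of class-$2$ groups of exponent $p$ again has exponent $p$ when $p$ is odd (since $(uv)^p=u^pv^p[v,u]^{\binom{p}{2}}$ and $p\mid\binom{p}{2}$), so the two sides are the two non-isomorphic extraspecial groups of order $p^5$. The identity you actually need is $p_-^{1+2}\ast p_-^{1+2}\cong p_-^{1+2}\ast p_+^{1+2}$; iterating it leaves exactly one $p_-^{1+2}=S(2,1,0)$ factor, which then plays the role of $S(n,1,0)$ with $n=2$, rather than the even/odd dichotomy your version produces. (Your substitution proving $S(n,1,0)\ast p_-^{1+2}\cong S(n,1,0)\ast p_+^{1+2}$ for $n\geq 3$, your classification of the factors with cyclic centre, and your argument that at most one type-$0$ factor has $n\geq 3$ are all correct.)

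Second, and more seriously, the abelian factor cannot be absorbed, and you were right to flag this as the delicate point: it is exactly where the statement fails as written. Take $S=p_+^{1+2}\ast C_{p^2}$, amalgamated over the unique subgroup of order $p$ of $C_{p^2}$. Then $|S'|=p$ and $Z(S)\cong C_{p^2}$ is cyclic, but $|S|=p^4$, and the only group of order $p^4$ among the listed forms is $S(3,1,0)$, which has exponent $p^3$ whereas $S$ has exponent $p^2$. Your own comparison argument already reveals what happens when $|A|>p$: the case $A\leq Z(S_i)$ forces $A\leq S'$, so instead $Z(S_i)\leq A$ and hence $Z(S_i)=S'$ for every $i$, which makes $S$ an extraspecial group centrally extended by a cyclic group of order at least $p^2$ --- a group not on the list. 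So no amount of care will close this gap; the corollary needs an extra hypothesis (for instance $Z(S)\cong C_p$, or a conclusion allowing a cyclic central factor) before a proof along your lines --- which is otherwise the right approach --- can be completed.
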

%
%
%
%

\section{\bf The proof of the main theorem}

In this section, we give a proof of Theorem 1.2.

\begin{theorem} Let $P$ be a finite $p$-group and $p$ be an odd prime. If $P'\cong C_p\times C_p$, then
the spheres occurring in $\mathcal{A}_p(P)_{\geq2}$ all have the
same dimension.
\end{theorem}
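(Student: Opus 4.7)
My plan is to reduce, via a quotient by a central subgroup of order $p$, to the setting of $p$-groups with cyclic derived subgroup handled by Bornand's theorem \cite{Bo2} and made structurally concrete by Corollary~2.2. First I would dispose of the obviously contractible case. Set $Z_{0}:=\Omega_{1}(Z(P))$. Since $Z_{0}$ is central of exponent $p$, for every $E\in\mathcal{A}_p(P)_{\geq 2}$ the product $EZ_{0}$ is elementary abelian of rank at least $2$, so $E\mapsto EZ_{0}$ is an order-preserving closure operator. Hence $\mathcal{A}_p(P)_{\geq 2}$ deformation retracts onto $\{E:Z_{0}\leq E\}$, which has minimum element $Z_{0}$ whenever the rank of $Z_{0}$ is at least $2$, making the poset contractible and the theorem vacuous in that case.

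From now on assume $Z:=\Omega_{1}(Z(P))$ has order $p$; since $P$ is nilpotent with $P'\neq 1$ we have $P'\cap Z(P)\neq 1$, which forces $Z\leq P'$. Applying the same closure operator with $Z$ in place of $Z_{0}$ yields $\mathcal{A}_p(P)_{\geq 2}\simeq X$, where $X=\{E\in\mathcal{A}_p(P)_{\geq 2}:Z\leq E\}$. Set $\overline{P}:=P/Z$; then $\overline{P}'=P'/Z\cong C_p$, and the map $E\mapsto E/Z$ realises $X$ as the subposet $Y\subseteq\mathcal{A}_p(\overline{P})$ consisting of those elementary abelian $F\leq\overline{P}$ (of rank at least $1$) whose preimage in $P$ is again elementary abelian. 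Equivalently, $F$ lies in $Y$ iff both the alternating commutator form $F\times F\to Z$ and the $p$-th power map $F\to Z$ induced by the central extension $1\to Z\to P\to\overline{P}\to 1$ vanish on $F$.

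The remaining task is to compute the homotopy type of $Y$, and I would split according to whether $Z(\overline{P})$ is cyclic. If it is, Corollary~2.2 expresses $\overline{P}$ as a central product of at most one factor $S(n,1,0)$ and several copies of $p_+^{1+2}$, whose elementary abelian subgroup lattices are well understood; I would then use the resulting join-type decomposition of $\mathcal{A}_p(\overline{P})$ and intersect it with the two vanishing conditions defining $Y$ to check that $Y$ is still a wedge of spheres of a single common dimension, matching the one produced by Bornand for $\overline{P}$. If $Z(\overline{P})$ is not cyclic, the first step applied to $\overline{P}$ makes $\mathcal{A}_p(\overline{P})_{\geq 2}$ contractible, and I would propagate contractibility down to $Y$ by exploiting a central element of order $p$ in $\Omega_{1}(Z(\overline{P}))$ not arising from $Z$.

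The main obstacle is this final combinatorial step: showing that the linear ``elementary-abelian-lift'' cut carving $Y$ out of $\mathcal{A}_p(\overline{P})$ does not destroy the single-dimensional-sphere conclusion supplied by Bornand on $\overline{P}$. I expect it to go through because the two vanishing conditions are linear in $F$, and the elementary abelian subgroups of each central-product factor $S(n,1,0)$ and $p_+^{1+2}$ admit an explicit description that meshes well with the join decomposition coming from the central product.
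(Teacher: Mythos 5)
Your reduction to $X=\{E\in\mathcal{A}_p(P)_{\geq2}:Z\leq E\}$ and the identification of $X$ with the poset $Y$ of elementary abelian subgroups of $\overline{P}=P/Z$ having elementary abelian preimage are both correct, and the first (contractibility when $\Omega_1(Z(P))$ has rank $\geq 2$) agrees with the opening move of the paper. But the proof stops exactly where the theorem begins. The step you label ``the main obstacle'' --- showing that cutting $\mathcal{A}_p(\overline{P})$ down to $Y$ by the vanishing of the commutator form and the power map preserves the single-dimension conclusion --- is the entire content of the result, and the heuristic you offer for it is unsound. The commutator condition is a totally-isotropic-subspace condition, not a linear one, and imposing it changes the homotopy type drastically: already for $P$ extraspecial, $\mathcal{A}_p(\overline{P})$ has a maximum element and is contractible, while $Y$ is (essentially) a Tits building, a wedge of many spheres. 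So ``Bornand's conclusion for $\overline{P}$ plus linear cuts'' gives you nothing; indeed Bornand's theorem applied to $\overline{P}$ (which has $\overline{P}'\cong C_p$) describes $\mathcal{A}_p(\overline{P})_{\geq2}$, a poset with no direct homotopy-theoretic relationship to $Y$. The secondary step is also gapped: when $Z(\overline{P})$ is not cyclic you propose to cone off $Y$ using a central element $\bar c$ of order $p$ in $\overline{P}$, but for $F\mapsto F\langle\bar c\rangle$ to preserve $Y$ you need a lift $c$ with $c^p=1$ and $[c,\tilde F]=1$ for the preimage $\tilde F$, whereas centrality of $\bar c$ only gives $c^p\in Z$ and $[c,P]\leq Z$. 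Neither assertion is justified, and the hypothesis $P'\cong C_p\times C_p$ (as opposed to larger $P'$, where the conclusion is known to fail by \cite{FS}) is never genuinely exploited.

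For comparison, the paper does not pass to the quotient at all. It sets $M=C_P(P')$ and applies the Bouc--Th\'evenaz wedge decomposition
$\Delta(\mathcal{A}_p(P)_{\geq2})\simeq\bigvee_{i}\Delta(\mathcal{A}_p(P_i)_{\geq2})$
with $P_i=C_M(E_i)$, where the $E_i$ are the rank-$2$ subgroups containing $Z$ and not contained in $M$. A Hall--Witt argument shows $M'\leq\langle a\rangle$, so each $P_i$ is a subgroup of $P$ with $|P_i'|\leq p$ and cyclic center; Corollary~2.2 then forces each non-contractible $P_i$ to be a central product of copies of $p_+^{1+2}$, and Bornand's Proposition~4.7 gives the sphere dimension as the number of factors minus one. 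The contradiction comes from an explicit commutator computation inside $M$ showing two such centralizers cannot have different numbers of factors. If you want to rescue your quotient strategy, you would need an independent computation of the homotopy type of $Y$ (in effect, a relative version of the building argument), which is not easier than the route the paper takes.
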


\begin{proof} Suppose that the spheres occurring in $\mathcal{A}_p(P)_{\geq2}$ do not have the
same dimension, then we want to get a contradiction to prove the theorem.

Since $P'\cong C_p\times C_p$, we can set $P'=\langle a\rangle\times \langle b\rangle$ and
$|a|=|b|=p$. If $Z(P)$ is not cyclic, then $\Delta(\mathcal{A}_p(P)_{\geq2})$ is contractible. That is
 a contradiction. Hence, $Z(P)$ is cyclic and we write $Z:=\langle a\rangle$
 for the unique subgroup of order $p$ in $Z(P)$ because $Z(P)\cap P'\geq Z$.
Moreover, we can assume $b\notin Z(P)$.

{\bf Step 1.} We set $E_0=P'$ and
$$M:=C_P(E_0):=\langle a, b, y_1, y_2,\ldots, y_n\rangle$$
for some  $ y_1, y_2,\ldots, y_n \in P$.
Let $\chi:=\{E_1, E_2,\ldots, E_k\}$ be the subset of $\mathcal{A}_p(P)_{>Z}$ of all elementary
abelian subgroups of rank 2 not contained in $M$. Suppose $\chi$ is empty set, thus
$\Delta(\mathcal{A}_p(P)_{\geq2})$ is contractible. That is
 a contradiction. Hence  $\chi$ is not empty set.
Then there exists $x\in E_1-M$, it implies that
$$[x, b]\neq 1, ~\mathrm{and}~ |x|=p.$$
We assert that $[x, b]=a^i$ for some $1\leq i\leq p-1$.
If $1\neq[x,b]=ba^i$, then we can see $[x, ba^i]=ba^i$ because $a\in Z(P)$. That is a contradiction.
Hence, $[x,b]=a^i$. Therefore, we can assume that
$$[x, b]=a.$$

{\bf Step 2.} We assert that $M'\leq\langle a\rangle$. Since $M'\leq P'=\langle a, b\rangle$, for each $y_i, y_j$, we have
$$[y_i, y_j]=a^{\varepsilon}b^\epsilon.$$
where $\varepsilon,\epsilon\in \mathbb{N}$.
We can see that
$$[x, [y_i, y_j]]=[x, a^{\varepsilon}b^{\epsilon}]=a^{\epsilon}.$$
But $[[x, y_i], y_j]=1$ and $[[y_j, x], y_i]=1$ because
$[x, y_i], [y_j, x]\in P'= \langle a, b\rangle= E_0$ and $y_i, y_j\in M=C_{P}(E_0)$.
By the Hall-Witt identity, we have $[x, [y_i, y_j]]=1$. It implies $b^{\epsilon}=1$.
So $\epsilon\equiv 0 ~(\mathrm{mod}~p)$.
Hence $$[y_i, y_j]\in \langle a\rangle$$
for every $y_i, y_j$. Because $\langle a, b\rangle\leq Z(M)$, thus $M'\leq\langle a\rangle$.

By the definition of $E_i$, we can set $E_i=\langle xt_i, a\rangle$ where $t_i\in M$ such that
$$ME_i=P.$$
In other words, set $F_i=\langle xt_i\rangle$ such that
$$P=M\rtimes F_i.$$
We consider the groups $C_M(E_i)$.
We set $P_i=C_M(E_i).$
Since $P_i\leq M$, thus $P_i'\leq M'\leq \langle a\rangle$.
Therefore, we have $P_i'\cong C_p$ or $P'=1$.

{\bf Step 3.} By the proof of \cite[Thoerem 14.1]{BT}, we can see that
$$\Delta(\mathcal{A}_p(P)_{\geq2})\simeq \bigvee_{i=1}^k\Delta(\mathcal{A}_p(P_i)_{\geq2}).$$
Since the spheres occurring in $\mathcal{A}_p(P)_{\geq2}$ do not have the
same dimension, thus we can suppose that the dimension of the sphere occurring in $\mathcal{A}_p(P_1)_{\geq2}$ do not equal the
 dimension of the sphere occurring in $\mathcal{A}_p(P_2)_{\geq2}$, and
 $\Delta(\mathcal{A}_p(P_1)_{\geq2})$ and $\Delta(\mathcal{A}_p(P_2)_{\geq2})$ are not contractible.

By Step 2, we can see that $P_1'\cong P_2'\cong C_p$ and $Z(P_1), Z(P_2)$ are cyclic. Here, we can set $P_1=C_M(E_1)=C_M(\langle x, a\rangle)$
and $P_2=C_M(E_2)=C_M(\langle xt, a\rangle)$ where $1=t_1, t=t_2\in M$.
By Corollary 2.2, we can set
$$P_1\cong S_{1}\ast S_{2}\ast\cdots\ast
S_{l}\ast A$$ where $S_{i}\cong S(n_{i},m_{i},0)~\mathrm{or}~S(n_{i},m_{i},1)$.
Since $Z(P_1)$ is cyclic, we can set
$$P_1\cong S_{1}\ast S_{2}\ast\cdots\ast
S_{l}\cong\underbrace{S(n_{1},1,0)\ast p_{+}^{1+2}\ast p_{+}^{1+2}\ast\cdots\ast
p_{+}^{1+2}}_l, $$ or
$$P_1\cong S_{1}\ast S_{2}\ast\cdots\ast
S_{l}\cong \underbrace{p_{+}^{1+2}\ast p_{+}^{1+2}\ast\cdots\ast
p_{+}^{1+2}}_l.$$
If $P_1\cong S(n_{1},1,0)\ast p_{+}^{1+2}\ast p_{+}^{1+2}\ast\cdots\ast
p_{+}^{1+2},$ we can set $P_1= \langle a, u_1, u_2,\ldots, u_{2l}\rangle$
such that $$\langle a, u_{2i-1}, u_{2i}\rangle\cong S_i, ~\mathrm{and }~ u_1^{p^{n_{1}-1}}=a.$$
We can see that every elementary
abelian subgroups of rank at least 2 must contain
$\langle u_1^{p^{n_{1}-1}}, u_2\rangle$. Hence,
  $\Delta(\mathcal{A}_p(P_1)_{\geq2})$ is contractible.
Hence, $$P_1\cong S_{1}\ast S_{2}\ast\cdots\ast
S_{l}\cong \underbrace{p_{+}^{1+2}\ast p_{+}^{1+2}\ast\cdots\ast
p_{+}^{1+2}}_l.$$

Similarly, we can set
$$P_2\cong R_{1}\ast R_{2}\ast\cdots\ast
R_{s}\cong \underbrace{p_{+}^{1+2}\ast p_{+}^{1+2}\ast\cdots\ast
p_{+}^{1+2}}_s.$$
Here, $n_1, n_2, l,s\in \mathbb{N}$. And we can see
$P_1=\langle a, u_1, u_2,\ldots, u_{2l}\rangle$
such that $$\langle a, u_{2i-1}, u_{2i}\rangle\cong S_i.$$
And we also can set
$P_2=\langle a, v_1, v_2,\ldots, v_{2s}\rangle$
such that $$\langle a, v_{2i-1}, v_{2i}\rangle\cong R_i.$$

By \cite[Proposition 4.7]{Bo2}, we can see that the dimension of the sphere occurring in $\mathcal{A}_p(P_1)_{\geq2}$ is $l-1$
and the dimension of the sphere occurring in $\mathcal{A}_p(P_2)_{\geq2}$ is $s-1$.
Since $s\neq l$, we can assume that $s< l$.

Since $P_1, P_2\leq M$ and $|M'|=p$, we can assume that $\langle u_{2i-1}, u_{2i}\rangle \cap P_2\leq Z(\langle P_1, P_2\rangle )$ for some $i$ because
 $s< l$.
Hence $[xt, u_{2i-1}]\neq 1$ and $[xt, u_{2i}]\neq 1$. But $[x, u_{2i-1}]=1$ and $[x, u_{2i}]=1$ (because
$u_{2i-1}, u_{2i}\in P_1=C_M(E_1)=C_M(\langle x, a\rangle)$),
thus $[t, u_{2i-1}]\neq 1$ and $[t, u_{2i}]\neq 1$.

Moreover, we can set
$$[t, u_{2i-1}]=a^{\varepsilon_1}\neq 1,$$
$$[t, u_{2i}]=a^{\epsilon_1}\neq 1$$
for some numbers $\varepsilon_1, \epsilon_1\in \mathbb{N}$.
There exist two numbers $(\varepsilon_1', \epsilon_1')\neq (0 ,0) ~(\mathrm{mod}~ p)$ such that
$\varepsilon_1\varepsilon_1'+\epsilon_1\epsilon_1' \equiv 0~(\mathrm{mod}~p)$.
Hence $$[t, u_{2i-1}^{\varepsilon_1'}u_{2i}^{\epsilon_1'}]=a^{\varepsilon_1\varepsilon_1'+
\epsilon_1\epsilon_1'}=1.$$
Here, $u_{2i-1}^{\varepsilon_1'}u_{2i}^{\epsilon_1'}\neq 1$. It implies $1\neq u_{2i-1}^{\varepsilon_1'}u_{2i}^{\epsilon_1'}\in P_2$.
But $\langle u_{2i-1}, u_{2i}\rangle \cap P_2\leq Z(\langle P_1, P_2\rangle )$, thus $u_{2i-1}^{\varepsilon_1'}u_{2i}^{\epsilon_1'}=1$.
That is a contradiction.
\end{proof}

\textbf{ACKNOWLEDGMENTS}\hfil\break
The author would like to thank  Prof. C. Broto for his constant encouragement in Barcelona in Spain.

\end{document}